\numberwithin{equation}{section}
\theoremstyle{plain} \newtheorem{theorem}{Theorem}[section]  \newtheorem{lemma}[theorem]{Lemma} \newtheorem{prop}[theorem]{Proposition}  \newtheorem{rem}[theorem]{Remark}  \newcommand{\ov}[1]{\overline{#1}}       
\begin{document}

\title{Complex submanifolds of almost complex Euclidean spaces}
\author{Antonio J. Di Scala and Luigi Vezzoni} \date{\today} \address{Dipartimento di Matematica \\ Politecnico di Torino \\ Corso Duca degli Abruzzi 24, 10129 Torino, Italy} \email{antonio.discala@polito.it} \address{Dipartimento di Matematica \\ Universit\`a di Torino\\ Via Carlo Alberta 10 \\ 10123 Torino\\ Italy} \email{luigi.vezzoni@unito.it}

\subjclass[2000]{Primary 32Q60 ; Secondary 32Q65}

\thanks{This work was supported by the Project M.I.U.R. ``Riemannian Metrics and  Differentiable Manifolds'' and by G.N.S.A.G.A. of I.N.d.A.M.}

\begin{abstract} We prove that a compact Riemann surface can be realized as a pseudo-holomorphic curve of $(\mathbb{R}^4,J)$, for some almost complex structure $J$ if and only if it is an elliptic curve. Furthermore we show that any (almost) complex $2n$-torus can be holomorphically embedded in $(\mathbb{R}^{4n},J)$ for a suitable almost complex structure $J$. This allows us to embed any compact Riemann surface in some almost complex Euclidean space and to show many explicit examples of almost complex structure in $\mathbb{R}^{2n}$ which can not be tamed by any symplectic form.

\end{abstract} \maketitle 

\newcommand\C{{\mathbb C}} \newcommand\R{{\mathbb R}} \newcommand\Z{{\mathbb Z}} \newcommand\T{{\mathbb T}} \newcommand\GL{{\rm GL}} \newcommand\SL{{\rm SL}} \newcommand\SO{{\rm SO}} \newcommand\Sp{{\rm Sp}} \newcommand\U{{\rm U}} \newcommand\SU{{\rm SU}} \newcommand{\Gdue}{{\rm G}_2} \newcommand\re{\,{\rm Re}\,} \newcommand\im{\,{\rm Im}\,} \newcommand\id{\,{\rm id}\,} \newcommand\tr{\,{\rm tr}\,} \renewcommand\span{\,{\rm span}\,} \newcommand\Ann{\,{\rm Ann}\,} \newcommand\Hol{{\rm Hol}} \newcommand\Ric{{\rm Ric}} \newcommand\nc{\widetilde{\nabla}} \renewcommand\d{{\partial}} \newcommand\dbar{{\bar{\partial}}} \newcommand\s{{\sigma}} \newcommand\sd{{\bigstar_2}} \newcommand\K{\mathbb{K}} \renewcommand\P{\mathbb{P}} \newcommand\D{\mathbb{D}} \newcommand\al{\alpha} \newcommand\f{{\varphi}} \newcommand\g{{\frak{g}}} \renewcommand\k{{\kappa}} \renewcommand\l{{\lambda}} \newcommand\m{{\mu}} \renewcommand\O{{\Omega}} \renewcommand\t{{\theta}} \newcommand\ebar{{\bar{\varepsilon}}} \newcommand{\dis}{\di^{\circledast}} \newcommand{\fo}{\mathcal{F}} \newcommand{\op}{\overline{\parzial}} \newcommand{\wn}{\widetilde{\nabla}} \newcommand{\I}{\operatorname{i}} \newcommand{\z}{\overline{z}} \newcommand{\wi}{\overline{w}}

\section{Introduction} An almost complex structure is a natural generalization of a complex structure to the non-holomorphic case. Namely, an almost complex structure on a $2n$-dimensional smooth manifold $M$ is a tensor $J\in {\rm End}(TM)$ such that $J^2=-{\rm I}$. An almost complex structure is called \emph{integrable} if it is induced by a holomorphic atlas. In view of the celebrated Newlander-Niremberg theorem, an almost complex structure is integrable if and only if the associated Nijenhuis tensor
$$ N(X,Y):=[JX,JY]-J[JX,Y]-J[X,JY]-[X,Y] $$ vanishes (see \cite{NeNi57}). In dimension $2$ any almost complex structure is integrable, while in higher dimension this is far from true. A smooth map $f\colon N\to M$ between two almost complex manifolds $(N,J')$, $(M,J)$ is called {\em pseudo-holomorphic} if $f_*J=J'f_*$. When the map $f$ is an immersion, $(N,J)$ is said to be an \emph{almost complex submanifold} of $(M,J)$. If $N$ is a compact Riemann surface, then the triple $(N,J',f)$ is called a \emph{pseudo-holomorphic curve}. A local existence result for pseudo-holomorphic curves appeared in \cite{NiWo63}.
The concept of pseudo-holomorphic curves was also implicit in early work of
Gray \cite{Gra65}. Such curves were studied by Bryant in \cite{Bry82} and are
related with the study of harmonic maps and minimal surfaces (see \cite{EeSa86}). The idea of using pseudo-holomorphic curves to study almost complex and symplectic manifolds is due to Gromov. In the celebrated paper \cite{Gro85} Gromov used pseudo-holomorphic curves to introduce new invariants of symplectic manifolds.
Subsequently such curves were taken into account by many authors (see e.g. \cite{McSa04,McSa98} and the references therein).\\

In the present paper we study compact pseudo-holomorphic curves embedded in $\R^{2n}$.
If we equip $\R^{2n}$ with a complex structure, then it does not admit any compact complex submanifold (by the maximum principle). Thus, it is a very natural problem to ascertain if it is possible to find compact complex manifolds embedded in $\R^{2n}$ equipped with a non-integrable almost complex structure. In the first part of this paper we prove the following

\begin{theorem}\label{main}
A compact Riemann surface $X$ can be realized as a pseudo-holomorphic curve of  $\R^{4}$ equipped with an almost complex structure
if and only if it is an elliptic curve.
\end{theorem}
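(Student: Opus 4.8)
The plan is to prove the two implications separately, with essentially all of the content in the ``only if'' direction. For the ``if'' direction I would use that an elliptic curve is a complex $1$-torus $\C/\Lambda$ and appeal to the torus embedding result quoted in the abstract (the case $n=1$), which embeds it holomorphically into $(\R^4,J)$ for a suitable $J$. Alternatively, and self-containedly, I would realize it as a flat torus $T^2\subset\R^4$, define $J$ along $T^2$ so that it acts as the prescribed complex structure on the tangent planes and as any compatible complex structure on the normal planes, and then extend the resulting tensor from a tubular neighbourhood to all of $\R^4$.

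For the ``only if'' direction, assume $f\colon X\to(\R^4,J)$ is a pseudo-holomorphic embedding of a compact Riemann surface of genus $g$. Since $f$ is pseudo-holomorphic, the image tangent planes are $J$-invariant, so $(f^*T\R^4,J)$ splits as a sum of complex line bundles $TX\oplus N$, where $TX$ carries the complex structure $j$ and $N$ is the normal bundle with the complex structure induced by $J$. As $\R^4$ is contractible, the complex rank-$2$ bundle $(T\R^4,J)$ is trivial and so is its pullback; hence
\[
c_1(TX)+c_1(N)=c_1(f^*T\R^4)=0 \quad\text{in } H^2(X;\Z).
\]
Evaluating on the fundamental class, together with $c_1(TX)[X]=\chi(X)=2-2g$, I obtain for the normal Euler number
\[
e(N)[X]=c_1(N)[X]=-(2-2g)=2g-2.
\]

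The remaining, and decisive, step is to compute $e(N)[X]$ topologically. For a closed oriented surface embedded in an oriented $4$-manifold the normal Euler number equals the homological self-intersection $[X]\cdot[X]$, with orientations fixed so that $T_pX\oplus N_p$ agrees with the ambient one---here precisely the complex orientation given by $J$. Because $H_2(\R^4;\Z)=0$ the class $[X]$ vanishes, so $[X]\cdot[X]=0$, and comparing the two expressions forces $2g-2=0$, i.e. $g=1$ and $X$ is an elliptic curve. I expect the main obstacle to be the careful identification $e(N)[X]=[X]\cdot[X]$ and the matching of the complex orientations on $TX$, $N$ and $\R^4$; embeddedness enters exactly here, since for a merely immersed pseudo-holomorphic curve one must add a nonnegative double-point correction, which by positivity of intersections in dimension four still yields $g\le 1$.
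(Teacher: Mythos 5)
Your proposal is correct, and its skeleton is the same as the paper's: for the ``if'' direction you invoke the torus-embedding theorem (Theorem \ref{2main} with $n=1$), which is exactly how the paper deduces this half of Theorem \ref{main}; for the ``only if'' direction you use the same decomposition $f^*(T\R^4)=TX\oplus N$ into complex line bundles and the same identity $c_1(TX)+c_1(N)=c_1(f^*T\R^4)=0$. The one point where you genuinely diverge is the proof that the normal term vanishes: the paper cites Whitney's theorem (from Hirsch) that a compact oriented $n$-submanifold of $\R^{2n}$ carries a nowhere vanishing normal field, hence $N$ is trivial and $c_1(TX)=0$; you instead derive $c_1(N)[X]=e(N)[X]=[X]\cdot[X]=0$ from $H_2(\R^4;\Z)=0$. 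This amounts to proving, rather than quoting, the relevant case of Whitney's theorem, so your argument is more self-contained (it is in effect the adjunction-formula argument the paper alludes to); what the paper's formulation buys is generality: its second proof, which converts a nonvanishing normal field into a nonvanishing tangent field by conjugating the quaternionic structure $J_2$, extends to show that any almost complex $2n$-submanifold of $(\R^{4n},J)$ has vanishing Euler characteristic, which your line-bundle computation does not give beyond the surface case. Your closing remark on immersed curves is also accurate: with positive double points one only gets $g\le 1$, which is why the theorem (and the paper's lemma, stated for an embedding $i\colon X\hookrightarrow\R^4$) concerns embedded curves.

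One caution: your ``alternative, self-contained'' proof of the ``if'' direction is incomplete as stated. An almost complex structure given on a tubular neighbourhood of the flat torus $T^2\subset\R^4$ does not automatically extend to $\R^4$: pointwise, the orientation-compatible linear complex structures on $\R^4$ form a space homotopy equivalent to $S^2$, so the extension is an obstruction problem, solvable precisely when the classifying map $T^2\to S^2$ is null-homotopic. In the case at hand this does hold (both $TT^2$ and the normal bundle of the flat torus are trivial, so the relevant degree vanishes), but it requires an argument; the paper's explicit global coframe $\alpha_1=dz-\I zw\,d\ov{w}$, $\alpha_2=dw+\I zw\,d\ov{z}$ avoids the extension problem altogether.
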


Recall that a complex torus $\mathbb{T}^n$ is the quotient of $\C^n$ by a lattice $\Lambda$.
If $n \geq 2$ it may or may not be algebraic (i.e., an abelian variety). We shall consider more generally $\R^{2n}$ equipped with an
almost complex structure invariant by a lattice. We call the quotient an \emph{almost complex torus}.

\begin{theorem}\label{2main}
Any almost complex torus $\mathbb{T}^n=\R^{2n}/\Lambda$ can be pseudo-holomorphically embedded into $(\R^{4n}, J_{\Lambda})$ for a suitable almost complex structure $J_\Lambda$.
\end{theorem}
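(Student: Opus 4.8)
The plan is to produce the embedding explicitly and then to build $J_\Lambda$ by first prescribing it along the image of the torus and afterwards extending it to all of $\R^{4n}$; the extension step is where the real work lies.

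First I would normalize the lattice: after a linear automorphism of $\R^{2n}$ carrying $\Lambda$ to $\Z^{2n}$ (which turns $J$ into another lattice-invariant almost complex structure) we may assume $\Lambda=\Z^{2n}$, so that $J$ is a smooth $\Z^{2n}$-periodic map $x\mapsto J(x)\in\GL(2n,\R)$ with $J(x)^2=-\id$. I would then embed $\T^n=\R^{2n}/\Z^{2n}$ into $\R^{4n}=(\R^2)^{2n}$ by the Clifford-type map $F(x)=\big(\cos 2\pi x_1,\sin 2\pi x_1,\dots,\cos 2\pi x_{2n},\sin 2\pi x_{2n}\big)$. This is an embedding whose image is a product of $2n$ circles; along it one has the explicit tangent frame $e_j=F_*\partial_{x_j}$ (the vector $(-\sin 2\pi x_j,\cos 2\pi x_j)$ in the $j$-th plane) and the radial normal frame $n_j=(\cos 2\pi x_j,\sin 2\pi x_j)$, so the normal bundle of $F(\T^n)$ is trivial of rank $2n$.

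Next I would prescribe $J_\Lambda$ along $N:=F(\T^n)$. On the tangent bundle there is no freedom: pseudo-holomorphicity of $F$ forces $J_\Lambda\circ F_*=F_*\circ J$, i.e. $J_\Lambda e_j=\sum_k J^k_j(x)\,e_k$, where $J^k_j$ are the entries of $J$. On the normal bundle I am free, and I would deliberately choose the \emph{conjugate} structure $J_\Lambda n_j=-\sum_k J^k_j(x)\,n_k$. With these two prescriptions $J_\Lambda$ preserves $TN$ and $\nu N$ separately and squares to $-\id$ on each, hence defines a genuine almost complex structure on the bundle $T\R^{4n}|_N$, and $F$ is pseudo-holomorphic by construction.

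The main obstacle is the final step: extending $J_\Lambda$ from $N$ to a smooth almost complex structure on all of $\R^{4n}$, across the locus where the radial frames degenerate. I would phrase this as a gauge problem: write $J_\Lambda|_N=A\,J_0\,A^{-1}$ for a smooth $A\colon N\to\GL(4n,\R)$, where $J_0$ is the standard structure; extending $J_\Lambda$ with $J_\Lambda=J_0$ outside a compact set amounts to extending the classifying map $N\to\GL(4n,\R)/\GL(2n,\C)$ to $\R^{4n}$, which (since $\R^{4n}$ is contractible) is possible precisely when that map is null-homotopic. Here the choice of the conjugate structure on the normal bundle pays off: the complex bundle determined by $J_\Lambda$ along $N$ is $E\oplus\overline{E}$, where $E=(T\T^n,J)$, and its underlying real bundle is trivial. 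Triviality of the real bundle forces the Pontryagin and Euler classes to vanish, whence the relations $c_k(\overline E)=(-1)^k c_k(E)$ give $c_\bullet(E\oplus\overline E)=0$; over the torus $\T^n$ a complex bundle of rank $2n$ with vanishing Chern classes is trivial, so the classifying map is null-homotopic. The extension then exists, producing the desired $J_\Lambda$ on $\R^{4n}$ equal to $J_0$ off a compact set, and I would finish by transporting everything back through the linear normalization of the first step.
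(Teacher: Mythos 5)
Your route is genuinely different from the paper's (which never extends a structure from a submanifold, but instead writes down explicit global coframes and checks the zero-set criterion of Lemma \ref{criterio}), and most of it is sound: the Clifford embedding, the forced tangential definition of $J_\Lambda$, the conjugate structure on the normal bundle, and the reduction of the extension problem to null-homotopy of the classifying map $\phi\colon N\to \GL(4n,\R)/\GL(2n,\C)$ are all correct. The gap is the final inference: \emph{triviality of the complex vector bundle $E\oplus\overline{E}$ does not imply that $\phi$ is null-homotopic}. Homotopy classes of maps into $\GL(4n,\R)/\GL(2n,\C)$ classify complex structures on the trivialized bundle $N\times\R^{4n}$ up to homotopy \emph{through complex structures}; this set surjects onto the set of isomorphism classes of the associated complex bundles, but it is not in bijection with it. Indeed, triviality of the bundle only gives a gauge $g\colon N\to\GL(4n,\R)$ with $\phi=g^{-1}J_0\,g$, and $\phi$ is null-homotopic only if $g$ can be deformed into the stabilizer $\GL(2n,\C)$; the defect is the image of $[N,\GL(4n,\R)]\to[N,\GL(4n,\R)/\GL(2n,\C)]$, which is nonzero in general. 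A concrete counterexample to the principle you invoke: every complex vector bundle over $S^{7}$ is trivial, yet in the stable range $\pi_{7}\bigl(\SO(4n)/\U(2n)\bigr)\cong\Z_2$ and the nonzero class is in the image of $\pi_{7}(\SO(4n))$, so there are complex structures on the trivial bundle over $S^{7}$ that are complex-trivial but not homotopic to any constant structure; such classes pull back nontrivially to tori of dimension $\geq 8$ under collapse maps, i.e.\ exactly in the range $n\geq 4$ your theorem still has to cover. (Separately, the Chern-class detour is heavier than needed for what it does prove: $E\oplus\overline{E}\cong (T\T^n)\otimes\C$ is the complexification of a trivial real bundle, hence trivial as a complex bundle with no appeal to classification results.)

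The gap is closable, and closing it vindicates your choice of the conjugate structure on the normal bundle: what must be produced is a homotopy of complex structures, not a bundle isomorphism, and here one exists explicitly. In your frame $(e_1,\dots,e_{2n},n_1,\dots,n_{2n})$ the prescribed structure is $J\oplus(-J)$, and
\begin{equation*}
J_t:=\begin{pmatrix}\cos t\,J & -\sin t\, I\\ \sin t\, I & -\cos t\, J\end{pmatrix},
\qquad J_t^2=-I \ \text{ for all } t,
\end{equation*}
is a path of complex structures joining $J\oplus(-J)$ at $t=0$ to the constant structure $\bigl(\begin{smallmatrix}0 & -I\\ I & 0\end{smallmatrix}\bigr)$ at $t=\pi/2$. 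Better still, for your Clifford frames this terminal structure, read in the ambient coordinates of $\R^{4n}$, sends $e_j\mapsto n_j$ and $n_j\mapsto -e_j$, which is the genuinely constant rotation by $-\pi/2$ in each coordinate $2$-plane, independent of the point of $N$. Hence $\phi$ is null-homotopic by an elementary explicit homotopy, the extension exists, and your argument goes through. With this repair your proof is correct and provides a soft, obstruction-theoretic alternative to the paper's proof, which instead constructs everything in closed form: the Clifford torus as $f^{-1}(0)$, the coframe $\alpha_1=dz-\I zw\,d\overline{w}$, $\alpha_2=dw+\I zw\,d\overline{z}$ verified through Lemma \ref{criterio}, and then a twisting of the coframe by the periodic matrix $(\tau_{ij},\tau_{i\overline{j}})$ to absorb an arbitrary lattice-invariant structure.
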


Theorem \ref{main} can be applied to prove the following

\begin{theorem}\label{genere}
Any compact Riemann surface can be realized as a pseudo-holomorphic curve of some $(\mathbb{R}^{2n},J)$, where $J$ is a suitable almost complex structure.
\end{theorem}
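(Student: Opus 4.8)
The plan is to split the argument according to the genus $g$ of the surface $X$, and to reduce the case $g\ge 1$ to Theorem \ref{2main} through the classical Abel--Jacobi embedding; the case $g=1$ is then exactly Theorem \ref{main}, while $g=0$ must be handled by a separate direct construction.

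First, suppose $g\ge 1$. Let $\mathrm{Jac}(X)=\C^{g}/\Lambda$ be the Jacobian of $X$, a $g$-dimensional complex torus, and let $u\colon X\to \mathrm{Jac}(X)$ be the Abel--Jacobi map associated with a base point. I would invoke the two standard facts that make $u$ a holomorphic embedding for every $g\ge 1$: it is injective by Abel's theorem (an equality $u(p)=u(q)$ with $p\neq q$ would produce a degree-one meromorphic function, forcing $X\cong\P^{1}$), and its differential is everywhere injective because the canonical system $|K_X|$ is base-point-free for $g\ge 1$. Viewing $\mathrm{Jac}(X)=\T^{g}=\R^{2g}/\Lambda$ as an (integrable) almost complex torus, Theorem \ref{2main} provides a pseudo-holomorphic embedding $\Phi\colon \T^{g}\to(\R^{4g},J_{\Lambda})$. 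Since a composition of pseudo-holomorphic maps is pseudo-holomorphic, and since an injective immersion of the compact manifold $X$ is automatically an embedding, the composition $\Phi\circ u\colon X\to(\R^{4g},J_{\Lambda})$ realizes $X$ as a pseudo-holomorphic curve, with $n=2g$.

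It remains to treat $g=0$, i.e. $X=\P^{1}\cong S^{2}$, where the Jacobian is trivial and the reduction above is empty. Here I would argue directly. Fix a standard embedding $\iota\colon S^{2}\hookrightarrow\R^{2n}$ (for instance $S^{2}\subset\R^{3}\subset\R^{2n}$) with $n\ge 3$, whose normal bundle $\nu$ is trivial of rank $2n-2$. On $T\R^{2n}|_{S^{2}}=TS^{2}\oplus\nu$ I would define $J$ as $j\oplus J_{\nu}$, where $j$ is the complex structure of $S^{2}$ and $J_{\nu}$ is a complex structure on the trivial bundle $\nu$ chosen so that $c_{1}(\nu,J_{\nu})=-2$; this is possible precisely because $-2$ is even, so the complex bundle carrying this Chern class has trivial underlying real bundle and can therefore be modeled on $\nu$. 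By construction the induced complex rank-$n$ bundle $(T\R^{2n}|_{S^{2}},J)$ has total first Chern class $c_{1}(TS^{2})+c_{1}(\nu,J_{\nu})=2-2=0$.

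The hard part is the final extension step: one must promote the pointwise data $J$ on $T\R^{2n}|_{S^{2}}$ to a genuine almost complex structure on all of $\R^{2n}$. The obstruction to extending $J$ from $S^{2}$ (equivalently from a tubular neighborhood, which retracts onto $S^{2}$) across the contractible space $\R^{2n}$ is measured by the homotopy class of the classifying map $S^{2}\to \SO(2n)/\U(n)$; since $\pi_{2}(\SO(2n)/\U(n))\cong\Z$ is detected by the first Chern class, which we have arranged to vanish, the map is null-homotopic. Once this single obstruction is killed, $J$ extends over $\R^{2n}$ (interpolating to the standard structure outside a large ball), yielding the desired $(\R^{2n},J)$ admitting $S^{2}$ as a pseudo-holomorphic curve. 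Verifying carefully that this cohomological obstruction is the only relevant one, and that the extension can indeed be carried out globally, is the main technical point of the $g=0$ case; by contrast, the case $g\ge 1$ is an essentially formal consequence of Theorem \ref{2main}.
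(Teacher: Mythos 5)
Your argument for genus $g\ge 1$ coincides with the paper's: the Abel--Jacobi embedding into the Jacobian followed by Theorem \ref{2main}, plus the standard facts that compositions of pseudo-holomorphic maps are pseudo-holomorphic and that injective immersions of compact manifolds are embeddings. The genuine difference is the genus-$0$ case. The paper disposes of $\mathbb{CP}^1$ by citing Calabi and Bryant: $\mathbb{CP}^1$ embeds pseudo-holomorphically in $S^6$ with its octonionic almost complex structure, as the intersection of $S^6$ with a suitable $3$-plane of ${\rm Im}\,\mathbb{O}$, and stereographic projection from a point off this curve carries everything to $\R^6$ with the pushed-forward structure. Your route is instead elementary obstruction theory: take the round $S^2\subset\R^3\subset\R^{2n}$ with $n\ge 3$, equip the trivial normal bundle $\nu$ with a complex structure $J_\nu$ of first Chern class $-2$ (possible precisely because $-2$ is even and the real rank $2n-2$ is at least $4$), so that $c_1\bigl(T\R^{2n}|_{S^2},\,j\oplus J_\nu\bigr)=0$, and then extend over $\R^{2n}$ using that the sole obstruction is the homotopy class of the classifying map in $\pi_2(\SO(2n)/\U(n))\cong\Z$, which is detected by $c_1$. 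This is correct: $S^2$ bounds the ball $B^3\subset\R^3$ onto which $\R^{2n}$ retracts, so extendability is equivalent to null-homotopy of the classifying map, and the homotopy exact sequence of $\U(n)\to\SO(2n)\to\SO(2n)/\U(n)$ identifies $\pi_2(\SO(2n)/\U(n))$ injectively with the even classes in $\pi_1(\U(n))=\Z$ via $c_1$. Comparing the two: the paper's argument is shorter but leans on nontrivial external input (the $\Gdue$-geometry of $S^6$), while yours is self-contained and makes the dimension threshold transparent --- for $n=2$ the normal bundle would be a complex line bundle with $c_1=-2$, hence nontrivial as a real bundle, consistently with Theorem \ref{main} excluding genus $0$ in $\R^4$; both constructions land in $\R^6$ at minimum. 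One small point you should make explicit: ``interpolating to the standard structure outside a large ball'' additionally requires the orientation induced by $j\oplus J_\nu$ to be the standard one, which you can arrange by the choice of $J_\nu$; otherwise the extension simply ends at a constant complex structure inducing the opposite orientation, which serves the theorem equally well.
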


Using the above results, in the last section we observe that the $J_{\Lambda}$'s are
explicit examples of almost complex structures on $\R^{2n}$ which cannot be tamed by any symplectic form.

\section{Preliminaries}

In the present paper all manifolds, maps, \emph{etc}., are taken to be smooth, i.e., $C^{\infty}$.

Let $J$ be an almost complex structure on a $2n$-dimensional smooth manifold $M$. We denote by $TM$ the tangent bundle to $M$ and by
$TM^*$ its dual bundle.
Then $J$ induces the splitting
$$
\Lambda^{1,0} \oplus \overline{\Lambda^{1,0}} = TM^* \otimes \C\,,
$$
$\Lambda^{1,0}$ being the eigenspace relative to $\I$. On the other hand, if an $n$-dimensional subbundle $\Lambda$ of $TM^* \otimes \C$
satisfying
$$
\Lambda\oplus \overline{\Lambda}=TM^* \otimes \C
$$
is given, then $\Lambda$ determines a canonical almost complex structure $J$ on $M$. Furthermore an almost complex structure allows us to split
the bundle $\Lambda^r\otimes \C$ of the complex $r$-form on $M$ in
\begin{equation}\label{dec}
\Lambda^{r}\otimes \C=\bigoplus_{p+q=r}\Lambda^{p,q}\,.
\end{equation}
The de Rham differential operator splits accordingly as $d=\overline{A}+\overline{\partial}+\partial+A$ with respect to \eqref{dec}. In view of the Newlander-Nirenberg theorem
$J$ is integrable if and only if $A=0$ or, equivalently, if and only if $\overline{\partial}\,^2=0$. We refer the reader to \cite{Sa02} for more details about Hermitian geometry.

If $M$ is a parallelizable  manifold, then the choice of an almost complex structure on $M$ is equivalent to the choice of a
global complex coframe $\{\alpha_1,\dots,\alpha_n\}$ satisfying
$$
\Lambda\oplus\overline{\Lambda}=TM\otimes\C\,, \qquad \Lambda=\span_\C\{\alpha_1,\dots,\alpha_n\}.
$$
The parallelizable case obviously includes  $\R^{2n}$ and the complex tori.

Let be $\Omega$ an open subset of $\R^{2n}$ and let $J$ be an almost complex structure on $\Omega$ determined by a complex coframe
$\{ \alpha_1 , \alpha_2, \cdots, \alpha_n\}$.
Fix a smooth map $f: \Omega \subset \R^{2n} \rightarrow \C$ and denote by $\mathcal{Z}_f :=f^{-1}(0)$ the zero set of $f$.
We are interested in understanding when $\mathcal{Z}_f$ is an almost complex hypersurface of $(\Omega,J)$. We have

\begin{lemma} \label{criterio} Assume that $ df \wedge \overline{df} |_{\mathcal{Z}_f} \neq 0$. Then
the following facts are equivalent:

\begin{itemize}

\item[(i)] $\mathcal{Z}_f$ is an almost complex hypersurface of $(\Omega,J)$, \,
\item[(ii)] $\span_{\C} \{ df, \overline{df} \}_{|\mathcal{Z}_f}$ is $J$-invariant,
\item[(iii)] $df \wedge \overline{df}_{|\mathcal{Z}_f} \in \Lambda^{1,1}$,
\item[(iv)] $df \wedge \overline{df} \wedge \alpha_1 \wedge \alpha_2 \wedge \cdots \wedge \alpha_{n|\mathcal{Z}_f} = 0\,,$
\item[(v)] $\overline{\partial{f}} \wedge \overline{\partial} f_{|\mathcal{Z}_f} = 0\,.$

\end{itemize}
\end{lemma}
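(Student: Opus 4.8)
The plan is to fix a point $p\in\mathcal{Z}_f$ and reduce every condition to linear algebra on the complexified cotangent space $T_p^*\Omega\otimes\C$, after decomposing the complex-valued $1$-form $df$ into its bidegree components $df=\partial f+\overline{\partial}f$ with $\partial f\in\Lambda^{1,0}$ and $\overline{\partial}f\in\Lambda^{0,1}$. Conjugating gives $\overline{df}=\overline{\overline{\partial}f}+\overline{\partial f}$ with $\overline{\overline{\partial}f}\in\Lambda^{1,0}$ and $\overline{\partial f}\in\Lambda^{0,1}$. I would then run the cycle (i)$\Leftrightarrow$(ii)$\Leftrightarrow$(iii), (iii)$\Leftrightarrow$(v), (iii)$\Leftrightarrow$(iv).

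First I would establish (i)$\Leftrightarrow$(ii). Since $f$ takes values in $\C\cong\R^2$ and $df\wedge\overline{df}\neq0$ on $\mathcal{Z}_f$, the differential $df_p$ has real rank $2$, so $\mathcal{Z}_f$ is a smooth real-codimension-$2$ submanifold with tangent space $\ker df_p$. Writing $df=\beta+\I\gamma$ with $\beta,\gamma$ real shows that the real annihilator of $T_p\mathcal{Z}_f$ is $\span_{\R}\{\beta,\gamma\}$, whose complexification is exactly $\span_{\C}\{df,\overline{df}\}$; the hypothesis also guarantees this is a genuine $2$-plane. Because a real subspace is $J$-invariant precisely when its annihilator is invariant under the transpose action of $J$ on forms, (i) and (ii) are equivalent.

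Next I would compute the bidegree decomposition of $df\wedge\overline{df}$. Expanding $(\partial f+\overline{\partial}f)\wedge(\overline{\overline{\partial}f}+\overline{\partial f})$, the $(2,0)$-component is $\partial f\wedge\overline{\overline{\partial}f}$, the $(0,2)$-component is $\overline{\partial}f\wedge\overline{\partial f}$, and the two remaining terms are of type $(1,1)$; moreover the pure $(2,0)$- and $(0,2)$-parts are conjugate up to sign, so one vanishes iff the other does. For (ii)$\Leftrightarrow$(iii) I would show that a $2$-plane $\span_{\C}\{\omega,\overline{\omega}\}$ is $J$-invariant iff it splits as the sum of a line in $\Lambda^{1,0}$ and a line in $\Lambda^{0,1}$, and that this splitting holds iff both pure parts of $\omega\wedge\overline{\omega}$ vanish, i.e. iff $\omega\wedge\overline{\omega}\in\Lambda^{1,1}$. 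The equivalence (iii)$\Leftrightarrow$(v) is then immediate, since (v) asserts the vanishing of a nonzero multiple of the $(0,2)$-component, which by conjugation forces both pure parts to vanish.

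The one step demanding a genuine bidegree count is (iii)$\Leftrightarrow$(iv). Wedging $df\wedge\overline{df}$ with the $(n,0)$-form $\alpha_1\wedge\cdots\wedge\alpha_n$, the $(2,0)$- and $(1,1)$-parts yield forms of type $(n+2,0)$ and $(n+1,1)$, which vanish because $\Lambda^{p,q}=0$ for $p>n$; only the $(0,2)$-part survives, giving $\overline{\partial}f\wedge\overline{\partial f}\wedge\alpha_1\wedge\cdots\wedge\alpha_n$ of type $(n,2)$. Since wedging a $(0,2)$-form with the nonvanishing top form $\alpha_1\wedge\cdots\wedge\alpha_n$ is injective on $\Lambda^{0,2}$, this $(n,2)$-form vanishes iff the $(0,2)$-component does, yielding (iv)$\Leftrightarrow$(iii). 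I expect this bidegree bookkeeping, together with the careful identification of the annihilator in (i)$\Leftrightarrow$(ii) and the tracking of conjugation conventions, to be the only points requiring attention; the rest is routine.
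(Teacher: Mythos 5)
Your proof is correct, and it shares the paper's overall skeleton: pointwise linear algebra on $T_p^*\Omega\otimes\mathbb{C}$, the decomposition $df=\partial f+\overline{\partial}f$, and essentially the same chain of equivalences, with (i)$\Leftrightarrow$(ii) handled by the same annihilator idea the paper phrases as completing $\{e^1,e^2\}$ to an adapted basis. The genuine difference is the mechanism for the central step (ii)$\Leftrightarrow$(iii). The paper argues with real forms: given (ii) it writes ${\rm span}_{\mathbb{C}}\{df,\overline{df}\}={\rm span}_{\mathbb{C}}\{\beta,J\beta\}$ for a real $1$-form $\beta$, observes that $\beta\wedge J\beta$ is $J$-invariant and hence of type $(1,1)$, and obtains the converse by a contradiction involving $J(df)\wedge df\wedge\overline{df}$. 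You instead use the eigenspace picture: a conjugation-stable $2$-plane is $J$-invariant iff it is the sum of a line in $\Lambda^{1,0}$ and the conjugate line in $\Lambda^{0,1}$, and you characterize this splitting by the vanishing of the pure parts of $df\wedge\overline{df}$; note that this characterization genuinely uses that the span is $2$-dimensional (the standing hypothesis $df\wedge\overline{df}\neq 0$) to exclude the degenerate case where the $(1,0)$- and conjugated $(0,1)$-components are proportional with proportionality factor of modulus one, so make that explicit in a write-up. Your route also exploits systematically the observation that the $(2,0)$- and $(0,2)$-parts of $df\wedge\overline{df}$ are conjugate up to sign, which renders (iii)$\Leftrightarrow$(iv) and (iii)$\Leftrightarrow$(v) immediate; in particular you supply exactly what the paper dismisses as ``obvious'' for (iii)$\Leftrightarrow$(iv), namely the bidegree count and the injectivity of wedging with $\alpha_1\wedge\cdots\wedge\alpha_n$ on $\Lambda^{0,2}$. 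The trade-off: the paper's real-form argument for (ii)$\Rightarrow$(iii) is shorter and more geometric, while your eigenspace argument is cleaner in the converse direction (the paper's contradiction computation there is rather opaque) and leaves no bidegree claim unjustified.
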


\begin{proof}
In order to prove the equivalence $({\rm i}) \Leftrightarrow ({\rm ii})$ we  write $df$ as $df = e^1 + \I e^2$ for
some $e^1,e^2$ in $T{\R^{2n}}^*$. Now it is enough to notice that $\ker(e^1) \cap \ker(e^2) = T_p\mathcal{Z}_f \subset \R^{2n}$ is a $J$-invariant space
if and only if it is possible to complete $\{e^1,e^2\}$ to a basis $\{e^1,e^2,e^3,\cdots,e^{2n}\}$ such that both the spaces
${\rm span}_{\R} \{e^1,e^2 \}$, ${\rm span}_\R\{e^3,\cdots,e^{2n}\}$ are $J$-invariant.

In order to prove  $({\rm ii}) \Rightarrow ({\rm iii})$ we observe that condition (ii) implies that we can assume
$
{\rm span}_{\C} \{ df, \overline{df} \}_{|\mathcal{Z}_f} = \span_{\C} \{ \beta, J\beta \}$ for some real $1$-form $\beta$ satisfying
$ \beta \wedge J \beta =  df \wedge \overline{df}_{|\mathcal{Z}_f}\,.
$
Furthermore
$$
J(\beta \wedge J \beta ) = J \beta \wedge J^2 \beta = \beta \wedge J\beta \,,
$$
which readily implies that
$df \wedge \overline{df}_{|\mathcal{Z}_f} $ is of type $(1,1)$.\\
In order to prove that $({\rm iii}) \Rightarrow ({\rm ii})$ we assume by contradiction that
$J (df) \notin \span_{\C} \{ df, \overline{df} \}_{|\mathcal{Z}_f}$; this implies that
$J (df) \wedge df \wedge \overline{df} \neq 0$. Then $J(J (df) \wedge df \wedge \overline{df}) \neq 0 $, but $(-(df) \wedge J(df) \wedge \overline{Jdf}) = (-(df) \wedge (df) \wedge \overline{df}) = 0$, which is a contradiction.

The equivalence (iii)$\Leftrightarrow$(iv) is obvious.

For the statement (ii)$\Leftrightarrow$(v) we note that the identity $df =\partial{f} + \overline{\partial}f $ implies $\overline{df} = \overline{\partial{f}} + \overline{\overline{\partial}f} $. Hence $ (df \wedge \overline{df})^{0,2} = \overline{\partial}f \wedge \overline{\partial{f}} $, and we are done.
\end{proof}

\section{Proof of the main results}

In this section we prove Theorems \ref{main} and \ref{2main}.
We start by proving a lemma which is a consequence of the adjunction formula \cite[p.139]{McSa98} and the following Theorem due to Whitney:

\begin{theorem}[\cite{Hir76}, page 138] Let $M\subset \R^{2n}$ be a compact oriented $n$-dimensional submanifold. Then $M$ has a non-vanishing normal vector field.
\end{theorem}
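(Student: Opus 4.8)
The plan is to reduce the statement to the vanishing of a single obstruction class. First I would observe that, since $M$ is $n$-dimensional and sits inside $\R^{2n}$, its normal bundle $\nu$ has rank $2n-n=n$; thus $\nu$ is a rank-$n$ real vector bundle over the $n$-dimensional manifold $M$. Because $M$ is oriented and $\R^{2n}$ carries its standard orientation, the normal bundle $\nu$ inherits a canonical orientation. A non-vanishing normal vector field is precisely a nowhere-zero section of $\nu$, equivalently a section of the associated sphere bundle $S(\nu)$, whose fibre is $S^{n-1}$.

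Next I would invoke obstruction theory, viewing the compact manifold $M$ as a finite CW complex of dimension $n$. The obstruction to extending a section of $S(\nu)$ over the $k$-skeleton lies in $H^{k}(M;\pi_{k-1}(S^{n-1}))$. Since the fibre $S^{n-1}$ is $(n-2)$-connected, every such group vanishes for $k<n$, so all obstructions below the top dimension are automatically zero and the only possibly non-trivial one is the top obstruction, which is exactly the Euler class $e(\nu)\in H^{n}(M;\Z)$. Hence it suffices to prove $e(\nu)=0$.

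The heart of the argument is therefore the vanishing of $e(\nu)$. I would check this on each connected component by evaluating on the fundamental class: the Euler number $\langle e(\nu),[M]\rangle$ equals the self-intersection number of $M$ inside $\R^{2n}$. Here I would exploit that $M$ is compact, hence bounded: translating $M$ by a sufficiently large vector $v\in\R^{2n}$ produces a disjoint copy $M'=M+v$, and $M_t=M+tv$, $t\in[0,1]$, is an ambient isotopy. Since the signed intersection number of two compact oriented submanifolds of complementary dimension in an oriented manifold is an isotopy invariant, the self-intersection number of $M$ equals the intersection number of $M$ with $M'$, which is $0$ because $M\cap M'=\varnothing$. (Equivalently, $[M]=0$ in $H_{n}(\R^{2n};\Z)$, so its homological self-intersection vanishes.) Thus $e(\nu)$ vanishes on every component and $e(\nu)=0$ in $H^{n}(M;\Z)$, whence a global section of $S(\nu)$ exists and gives the desired nowhere-vanishing normal vector field.

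The step I expect to be the main obstacle is the careful identification of the Euler number with the self-intersection number together with its vanishing; in particular one must justify the isotopy invariance of the signed intersection count in the \emph{noncompact} ambient space $\R^{2n}$ and treat disconnected $M$ componentwise. The orientation bookkeeping required to orient $\nu$ and to define the signed count also needs attention, although it is routine.
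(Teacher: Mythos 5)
The paper does not prove this statement at all: it is imported verbatim as a known theorem of Whitney, with a citation to Hirsch's \emph{Differential Topology} \cite[p.~138]{Hir76}, and the authors immediately use it as a black box in the two proofs of their genus-one lemma. So there is no internal proof to compare against; what you have done is supply the missing argument, and your proof is correct. It is, moreover, essentially the classical one (and in substance the one in the cited reference): since $\nu$ has rank $n$ over the $n$-dimensional complex $M$ and is oriented, all obstructions to a section of the sphere bundle below the top dimension vanish, the top obstruction is the Euler class $e(\nu)\in H^n(M;\Z)$, and $\langle e(\nu),[M]\rangle$ is the self-intersection number of $M$ in $\R^{2n}$, which vanishes because a compact $M$ can be translated off itself (equivalently, $[M]=0$ in $H_n(\R^{2n};\Z)$). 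Your two points of caution are handled correctly: homotopy invariance of the intersection number is unproblematic here because both submanifolds being intersected are compact even though the ambient $\R^{2n}$ is not, and working componentwise deals with disconnected $M$. The only cosmetic remarks are the degenerate case $n=1$ (where the fibre $S^0$ is not connected, but an oriented line bundle is trivial anyway) and the routine smoothing of the continuous section produced by obstruction theory; neither affects the validity of the argument.
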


\begin{lemma} Let $J$ be an almost complex structure on $\R^4$ and let $i\colon X\hookrightarrow \R^4$ be a pseudo-holomorphic curve of $(\R^4,J)$. Then $X$ is a torus, i.e. $X$ has genus one. \end{lemma}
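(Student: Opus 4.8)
The plan is to read off the genus of $X$ directly from the adjunction formula, exploiting that the ambient manifold $\R^4$ forces every term on the right-hand side to vanish. Since $i$ is an embedding of a pseudo-holomorphic curve, the tangent bundle $TX$ and the normal bundle $\nu$ of $X$ in $\R^4$ are both $J$-invariant, hence complex line bundles, and $TM|_X = TX \oplus \nu$ splits as a direct sum of complex bundles. The adjunction formula in this embedded setting is exactly the identity obtained by evaluating $c_1$ of this splitting on the fundamental class $[X]$, namely
$$ 2g(X)-2 = [X]\cdot[X] - c_1(\R^4,J)\cdot[X]\,, $$
where $[X]\cdot[X]$ denotes the self-intersection number (equivalently the Euler number $e(\nu)$ of the normal bundle) and $c_1(\R^4,J)$ is the first Chern class of $(T\R^4,J)$.

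The core of the argument is to show that both terms on the right vanish when the ambient space is $\R^4$. For the Chern term I would use that $\R^4$ is contractible, so $T\R^4$ is trivial and $H^2(\R^4;\Z)=0$; consequently $c_1(\R^4,J)=0$ and a fortiori $c_1(\R^4,J)\cdot[X]=0$. For the self-intersection term I would invoke the quoted theorem of Whitney: since $X$ is compact and carries the orientation induced by $J$, Whitney's theorem produces a nowhere-vanishing normal vector field along $X$. A nowhere-vanishing section of the oriented rank-two bundle $\nu$ forces its Euler number to vanish, so $[X]\cdot[X]=e(\nu)=0$. Substituting both vanishings into the adjunction formula gives $2g(X)-2=0$, that is $g(X)=1$, so $X$ is a torus.

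The delicate point, and the step I expect to be the main obstacle, is the correct application of the adjunction formula in the genuinely almost complex (non-integrable, not necessarily symplectic) category, together with the sign conventions tying the homological self-intersection to the normal Euler number. It is precisely the orientation induced by $J$ on $X$, and the resulting complex structure on $\nu$, that make $[X]\cdot[X]$ and $e(\nu)$ agree in sign and that allow Whitney's theorem to be applied. One should also be careful that, although $[X]=0$ in $H_2(\R^4)=0$, the self-intersection cannot simply be read off as a homological pairing but must be computed as the Euler number of the normal bundle; Whitney's non-vanishing section is exactly what guarantees that this Euler number is zero.
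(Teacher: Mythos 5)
Your proof is correct and follows essentially the same route as the paper's first proof: the adjunction identity you write down is precisely the evaluation on $[X]$ of $c_1$ applied to the splitting $i^*(T\R^4)=TX\oplus N(X)$, with Whitney's theorem killing the normal term and the triviality of $T\R^4$ killing the ambient Chern term, so that $c_1(TX)=0$ and $g(X)=1$. One inessential quibble about your closing remark: since $H^2(\R^4)=0$, the normal Euler number of a compact \emph{oriented} embedded surface does in fact vanish for purely homological reasons, so Whitney's theorem, while exactly what the paper invokes, is not the only way to see $e(\nu)=0$.
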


\begin{proof}[First proof] Let $i\colon X\hookrightarrow \R^4$ be an immersion and denote by $N(X)$ the vector bundle normal to $X$. Let $i^*(T\R^4) = i^*(\R^4 \times \R^4)$ denote the trivial vector bundle on $X$. Then $i^*(\R^8)=TX\oplus N(X)$. By hypothesis $TX$ and $N(X)$ are complex rank one bundles on $X$ and by the above theorem of Whitney $N(X)$ is trivial. Hence, since $\dim_{\C}(N(X))=1$, if $c_1(\cdot)$ denotes the first Chern class, then one has $$ 0=c_1(i^*(\R^8))=c_1(TX\oplus N(X))=c_1(TX)\oplus c_1(N(X))=c_1(TX)\,. $$ Therefore $TX$ has to be trivial and the claim follows. \end{proof}

\begin{proof}[Second proof] Let $J$ be an almost complex structure on $\R^4$ and let $i\colon X\hookrightarrow \R^4$ be a pseudo holomorphic curve of $(\R^4,J)$. Fix an arbitrary $J$-Hermitian metric $g$ on $\R^4$ and let $N(X)$ be the normal bundle to $X$ with respect to $g$. Denote by $\{J_1,J_2,J_3\}$ the quaternionic complex structure of $\R^4$ $$ J_1= \begin{pmatrix} 0& 0 & -1 &0 \\ 0& 0 &  0 &-1\\ 1& 0 &  0 &0\\ 0& 1 &  0 &0 \end{pmatrix}\,,\quad J_2= \begin{pmatrix} 0& 1 &0 &0 \\ -1& 0 &  0 &0\\ 0& 0 &  0 &1\\ 0& 0 &  -1 &0 \end{pmatrix}\,,\quad J_3:=J_1J_2\,. $$ There exists a smooth map $A\colon \R^4\to {\rm GL}(4,\R)$ such that $J_0=A^{-1}JA$.  By the result of Whitney (see e.g. \cite[p.138]{Hir76}) we have that $N(X)$ has a nowhere vanishing global section $V$. Then $W:=AJ_2A^{-1}V$ is a nowhere vanishing tangent vector to $X$. \end{proof}

Using the argument of the second proof one can prove the following more general result:

\begin{prop} Let $J$ be an almost complex structure on $\R^{4n}$ and let $(M^{2n},J_M)$ be an almost complex submanifold of $(\R^{4n},J)$. Then $M$ admits a nowhere vanishing tangent vector field. Thus the Euler characteristic $\chi(M)$ vanishes. \end{prop}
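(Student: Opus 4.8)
The plan is to mimic the “second proof’’ of the genus-one lemma, but now in dimension $4n$ with $M$ of real dimension $2n$. First I would fix an arbitrary $J$-Hermitian metric $g$ on $\R^{4n}$ and let $N(M)$ denote the normal bundle of $M$ with respect to $g$; by construction $M$ and $N(M)$ are both complex rank-$n$ bundles, and $i^*(T\R^{4n})=TM\oplus N(M)$. The key topological input is Whitney’s theorem quoted above: since $M\subset\R^{4n}$ is a compact oriented $2n$-dimensional submanifold, $N(M)$ (which has the right rank $2n$) admits a nowhere vanishing global section $V$. This is exactly the step that constrains the ambient dimension to be $4n$: we need the real rank of $N(M)$ to equal $2n$ so that Whitney applies.

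Next I would produce, at each point of $\R^{4n}$, a linear endomorphism that carries $N(M)$ isomorphically into $TM$. In the dimension-four argument this was the role of $J_2$, the second quaternionic structure, which anticommutes with $J_1$ and hence maps the $J$-invariant tangent plane to its $g$-orthogonal complement. For the general case I would choose a smooth field of endomorphisms $B\colon\R^{4n}\to\mathrm{GL}(4n,\R)$ with the property that, fiberwise, $B$ anticommutes with $J$ and is $g$-orthogonal, so that $B$ maps the fibre $T_pM$ (a complex $J$-subspace) onto a subspace that is $g$-orthogonal to it, i.e. into the normal fibre $N_p(M)$, and symmetrically carries $N_p(M)$ into $T_pM$. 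Concretely one can take $A\colon\R^{4n}\to\mathrm{GL}(4n,\R)$ trivializing $J$ as $J_0=A^{-1}JA$ for the standard $J_0$ on $\R^{4n}\cong\C^{2n}\cong\mathbb H^n$, and set $B:=A\,\mathbf J_2\,A^{-1}$ where $\mathbf J_2$ is the block-diagonal quaternionic structure built from $n$ copies of the $4\times 4$ matrix $J_2$.

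Then the vector field $W:=B\,V$ is a nowhere vanishing section of $T\R^{4n}$ along $M$, and since $B$ sends the normal fibre into the tangent fibre, $W$ is everywhere tangent to $M$. Thus $M$ carries a nowhere vanishing tangent vector field, and by the Poincaré–Hopf theorem its Euler characteristic $\chi(M)$ vanishes, which is the assertion.

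The main obstacle is justifying that $B$ really interchanges $TM$ and $N(M)$ fibrewise. The clean point to verify is that $B$ is $g$-orthogonal and $g$-skew (equivalently $B^2=-\mathrm{I}$ and $B$ anticommutes with $J$): then for $u\in T_pM$ and any $v\in T_pM$ one has $g(Bu,v)=-g(u,Bv)$, and combined with the $J$-invariance of $T_pM$ one deduces $g(Bu,w)=0$ for all $w\in T_pM$, so $Bu\in N_p(M)$; the same computation shows $BV$ lands in $TM$. The delicate part is arranging that the trivialization $A$ can be chosen so that $B=A\mathbf J_2 A^{-1}$ is simultaneously compatible with $g$ and anticommuting with $J$ at every point; one sidesteps this by instead choosing $g$ to be the metric for which $A$ is an isometry (i.e. transport the standard Euclidean metric through $A$), so that the orthogonality of $\mathbf J_2$ is automatically inherited by $B$. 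With $g$ chosen this way the remaining verifications are the routine linear-algebra identities for quaternionic structures and pose no real difficulty.
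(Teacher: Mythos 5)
Your strategy---Whitney's theorem to produce a nonvanishing normal section $V$, then a field of endomorphisms $B=A\mathbf{J}_2A^{-1}$ anticommuting with $J$ to rotate $V$ into the tangent bundle---is exactly the route the paper intends (it literally says ``using the argument of the second proof''), and your choice of $g$ as the Euclidean metric transported through the trivialization $A$ is a sensible fix of a point the paper leaves implicit even in $\mathbb{R}^4$. However, the step you call routine linear algebra is in fact false for $n\geq 2$. From $B$ being $g$-skew and anticommuting with $J$, setting $\phi(u,w):=g(Bu,w)$ on $T_pM$, you get only that $\phi$ is skew-symmetric and satisfies $\phi(Ju,Jw)=-\phi(u,w)$; these two identities force $\phi(u,u)=0$ and $\phi(u,Ju)=0$, i.e.\ $Bu\perp\mathrm{span}_{\mathbb{R}}\{u,Ju\}$, and nothing more. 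That suffices precisely when $T_pM$ is a complex line---the $\mathbb{R}^4$ Lemma---but it does not give $Bu\perp T_pM$ when $\dim_{\mathbb{C}}T_pM\geq 2$, which is the whole content of the Proposition.

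The failure is concrete, and visible in your own notation: take $A=\mathrm{I}$, so $J=J_0$ and $B=\mathbf{J}_2$ are both block-diagonal, and consider the subspace $T=\mathbb{R}^4\times\{0\}\subset\mathbb{R}^{4n}$ (the first block). It is $J_0$-invariant, yet $\mathbf{J}_2(T)=T$ rather than $\mathbf{J}_2(T)\subset T^{\perp}$; equivalently, identifying $\mathbb{R}^{4n}\cong\mathbb{H}^n$ with $J_0$ and $\mathbf{J}_2$ acting as multiplication by $i$ and $j$, every quaternionic line is a $J_0$-complex plane preserved by $\mathbf{J}_2$. No cleverer choice of $B$ escapes this: any orthogonal $B$ anticommuting with $J$ generates, together with $J$, a quaternionic structure, and its quaternionic lines are ``bad'' planes. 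Moreover, in this configuration $B$ preserves the normal space as well, so $BV$ can lie entirely in $N_p(M)$ and even the tangential projection of $BV$ can vanish; your proof supplies no argument that the tangent spaces of $M$ avoid such positions. Repairing this requires a genuinely new ingredient---for instance a global bundle isomorphism $N(M)\to TM$ (equivalently $N(M)\cong\overline{TM}$ along $M$), or a cohomological argument replacing the pointwise algebra---so as written your argument proves the Lemma ($n=1$) but not the Proposition.
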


Hence the first part of theorem \ref{main} is proved. Now we show the proof of Theorem \ref{2main}, which in particular implies the second part of Theorem \ref{main}.

\begin{proof}[Proof of Theorem $\ref{2main}$]
First of all we consider the case of the standard elliptic curve $\T=\C/\Z^2$:

Let $J$ be the almost complex structure on $\R^4$ determined by the following complex frame
$$
\begin{aligned} & \alpha_1:=dz -\I zw\,d\overline{w}\,, \\ & \alpha_2 := dw+\I zw \,d\overline{z}\,, \end{aligned}
$$
where $z,w$ are the standard complex coordinates on $\R^4$
and let $f\colon \C^2\to \C$ be the map $f(z,w)=(|z|^2 - 1)+ i (|w|^2 - 1)$. Then $f^{-1}(0)$ defines a smooth embedding of the torus in $\R^4$.
We have
$$
\begin{aligned} & df \wedge \overline{df} \wedge \alpha_1 \wedge \alpha_2 = 2\I d |w|^2 \wedge d|z|^2 \wedge (dz \wedge dw +
(zw)^2 d\wi \wedge d\z) =\\ &2\I d |w|^2 \wedge d|z|^2 \wedge dz \wedge dw +2\I d |w|^2 \wedge d|z|^2 \wedge (zw)^2 d\wi \wedge d\z=\\
&2\I w d\wi \wedge z d\z \wedge dz \wedge dw+ 2\I  \wi dw \wedge \z dz \wedge (zw)^2 d\wi \wedge d\z =\\ & 2\I w z ( d\wi\wedge
 d\z\wedge dz\wedge dw + |zw|^2 dw\wedge dz\wedge d\wi\wedge d\z)=\\ &2 \I z w (1 - |zw|^2)d\wi\wedge d\z\wedge dz\wedge dw \,.
\end{aligned}
 $$
Hence
$$
df \wedge \overline{df} \wedge \alpha_1\wedge \alpha_2 |_{f^{-1}(0)} = 0
$$
and in view of Lemma \ref{criterio} $f^{-1}(0)$ is an almost complex submanifold of $(\R^4,J)$.

Now we consider the general case:

Let $(\T^{2n}:=\R^{4n}/\Z^{4n},J_0)$ be the $2n$-dimensional torus equipped with the standard complex structure.
In this case, the space of the $(1,0)$-forms is generated by $\{dz_1,\dots,dz_n\}$.

Any almost complex structure $J'$ on $\T^{n}$ can be described by a coframe $\{\zeta_1,\dots,\zeta_n\}$ of the form
$$
\zeta_i=\sum_{j=1}^n\tau_{ij}\,dz_j+\sum_{\ov{j}=1}^n\tau_{i\ov{j}}\,dz_{\ov{j}}\,,\mbox{ for }i=1,\dots, n\,,
$$
where the $\tau_{ij}$'s and the $\tau_{i\ov{j}}$'s are complex maps on $\T^{2n}$. Such an almost complex structure corresponds to
a  lattice $\Lambda$ in $\R^{2n}$.
In view of the first step of this proof, the standard complex torus $(\T^{2n}:=\R^{4n}/\Z^{4n},J_0)$
can be holomorphically embedded in $(\R^{4n},J)$, for a suitable $J$.
Let $\{\alpha_1,\dots,\alpha_n,\beta_1,\dots,\beta_n\}$ be a $(1,0)$-coframe on $(\R^{4n},J)$ satisfying
\begin{equation}\label{base}
i^*(\alpha_i)=i^*(\beta_i)=dz_i\,,\quad i=1,\dots,n\,.
\end{equation}
The maps  $\{\tau_{ij}\}\,,\{\tau_{i\ov{j}}\}$ can be regarded as periodic maps on $\R^{2n}$ and they can be extended on $\R^{4n}$. We can assume
that conditions \eqref{base} still hold on $\R^{4n}$. Now set
$$
\begin{aligned}
\alpha_i^{\tau}:=\sum_{k=1}^n\,\tau_{ik}\alpha_k+\sum_{k=1}^n\tau_{i\ov{k}}\,\beta_{\ov{k}}\,,\\
\beta_j^{\tau}:=\sum_{k=1}^n\,\tau_{ik}\beta_k+\sum_{k=1}^n\tau_{i\ov{k}}\,\alpha_{\ov{k}}\,,
\end{aligned}
$$
for $i,j=1\dots,n$. The frame $\{\alpha_1^{\tau},\dots,\alpha_n^{\tau},\beta_{1}^{\tau},\dots,\beta_n^{\tau}\}$
induces an almost complex structure $J_{\Lambda}$ on $\R^{4n}$. A direct computation gives
$$
i^*(\alpha^{\tau}_i)=i^*(\beta^{\tau}_i)=\zeta_i\,,\mbox{ for } i=1,\dots,n\,.
$$
Hence $i\colon (\T^{2n},J')\hookrightarrow (\R^{4n},J_{\Lambda})$ is a pseudo-holomorphic embedding.
\end{proof}
Using the well-known Abel-Jacobi map one can embed any compact Riemann surface of positive genus in its Jacobian variety.
Hence in view of above theorem any compact Riemann surface of positive genus can be embedded in some $(\R^{2N},J)$.
So, in order to prove Theorem \ref{genere}, we have to show that $\mathbb{CP}^1$ can be embedded in some $(\R^{2n},J)$ for a suitable $J$.
We have the following
\begin{lemma}
The Riemann sphere $\mathbb{CP}^1$ can be pseudo-holomorphically embedded in $(\R^{6},J)$, for a suitable $J$.
\end{lemma}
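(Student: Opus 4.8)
The plan is to exhibit a smooth embedding of $S^2=\mathbb{CP}^1$ together with an almost complex structure on $\R^6$ making it pseudo-holomorphic, by first prescribing $J$ along the surface and then extending it. Note that the obstruction of the previous Proposition applies only to submanifolds of $\R^{4n}$, hence for a surface it only rules out $\R^4$; in $\R^6$ there is extra room in the normal directions, and the argument below shows this room is exactly enough to cancel the relevant Chern class. First I would embed $S^2$ as the round sphere $S^2\subset\R^3\subset\R^6=\R^3\oplus\R^3$. Along this sphere the bundle $T\R^6|_{S^2}$ is trivial of rank $6$ and splits as $TS^2\oplus N$, where the normal bundle $N$ is trivial of rank $4$ (the trivial normal line of $S^2$ in $\R^3$ together with the complementary $\R^3$). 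Pseudo-holomorphicity of the inclusion amounts to choosing a fibrewise complex structure $J$ on $T\R^6|_{S^2}$ that preserves each $T_pS^2$ and induces on it the complex structure of $\mathbb{CP}^1$, and then extending $J$ over all of $\R^6$; the extension is free, since the condition $J(T_pS^2)=T_pS^2$ is only imposed at points of the sphere.

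Second, I would carry out the Chern-class bookkeeping that makes the extension possible. The linear complex structures on $\R^6$ form $\GL(6,\R)/\GL(3,\C)$, one component of which is $\SO(6)/\U(3)\cong\mathbb{CP}^3$; complex structures on the trivial rank-$6$ bundle over $S^2$ are therefore classified by $\pi_2(\mathbb{CP}^3)\cong\Z$, an invariant detected by the first Chern class (which is necessarily even, the generator corresponding to $c_1=\pm2$). Working within the splitting $TS^2\oplus N$, the tangential factor contributes $c_1(TS^2)=\chi(S^2)=2$. On the trivial rank-$4$ bundle $N$ the complex structures are classified by $\pi_2(\SO(4)/\U(2))=\pi_2(S^2)\cong\Z$, with $c_1$ equal to twice the degree, so I may choose $J_N$ with $c_1(N)=-2$. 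The resulting $J=J_{TS^2}\oplus J_N$ on $T\R^6|_{S^2}$ then has total $c_1=2+(-2)=0$, so its classifying map $\phi\colon S^2\to\mathbb{CP}^3$ is nullhomotopic.

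Finally, because $\phi$ is homotopic to a constant map and $S^2\hookrightarrow\R^6$ is a cofibration, the homotopy extension property lets me extend $\phi$ to a map $\Phi\colon\R^6\to\mathbb{CP}^3$, i.e. to a genuine almost complex structure $J$ on all of $\R^6$ restricting to the prescribed one along the sphere. By construction $J$ preserves each $T_pS^2$ and induces there the complex structure of $\mathbb{CP}^1$, so $\mathbb{CP}^1\hookrightarrow(\R^6,J)$ is a pseudo-holomorphic embedding. I expect the only real difficulty to be the middle step: identifying the space of complex structures with $\mathbb{CP}^3$ and tracking $c_1$ through the decomposition $TS^2\oplus N$ so as to force the total Chern class to vanish. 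Once the classifying map is seen to be nullhomotopic, both the extension and the verification of pseudo-holomorphicity are formal.
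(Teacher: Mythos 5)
Your proof is correct, but it takes a genuinely different route from the paper's. The paper does not construct $J$ by hand at all: it invokes the octonionic almost complex structure on $S^6$ (Calabi), Bryant's result that intersecting $S^6$ with a suitable $3$-dimensional subspace of ${\rm Im}\,\mathbb{O}$ gives a pseudo-holomorphic $\mathbb{CP}^1$ in $(S^6,J)$, and then transports everything to $\R^6$ by stereographic projection from a point off the curve, equipping $\R^6$ with the pushed-forward structure. Your argument instead prescribes $J$ along a round $S^2\subset\R^3\subset\R^6$ and extends it by obstruction theory; the key computation is sound: $\pi_2(\SO(6))=0$ and the surjection $\pi_1(\U(3))\to\pi_1(\SO(6))=\Z/2$ give $[S^2,\SO(6)/\U(3)]\cong\pi_2(\mathbb{CP}^3)\cong 2\Z$ detected faithfully by $c_1$, so cancelling $c_1(TS^2)=2$ against a complex structure with $c_1=-2$ on the trivial rank-$4$ normal bundle makes the classifying map nullhomotopic, and the HEP then yields the extension. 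What each approach buys: the paper's proof is short and produces an explicit, geometrically meaningful $J$ (the stereographic image of the octonionic structure), at the price of citing Calabi and Bryant; yours is self-contained, uses only standard bundle theory, and actually generalizes --- since $[X,\mathbb{CP}^3]\cong H^2(X;\Z)$ for any compact surface $X$ and $c_1(TX)=2-2g$ is always even, the same cancellation pseudo-holomorphically embeds \emph{every} compact Riemann surface in some $(\R^6,J)$, which is sharper than what Theorem \ref{genere} achieves via Jacobians (where the ambient dimension grows with the genus). Two points you gloss over, both routine but worth stating: the HEP gives only a continuous extension, so you must smooth it (e.g.\ first extend $J$ over a tubular neighborhood of $S^2$ by pulling back along the projection, then apply Whitney approximation relative to a smaller closed tube); and you should normalize $J_{TS^2}\oplus J_N$ to be orthogonal for the flat metric and to induce the standard orientation, so that the classifying map genuinely lands in the chosen component $\SO(6)/\U(3)$.
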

\begin{proof}
The sphere $S^6$ has a canonic almost complex structure $J$ induced by the algebra of the  octonians $\mathbb{O}$ (see e.g., \cite{Ca58,Bry82}). Moreover, if we intersect $S^6$ with a suitable
$3$-dimensional subspace of $\rm{Im}\,\mathbb{O}$, then we obtain a pseudo-holomorphic embedding of $\mathbb{CP}^1$ in $(S^6,J)$ (see \cite{Bry82}, again). Hence the stereographic projection $\pi\colon S^6-\{\mbox{a point}\}\to \R^6$ induces a pseudo-holomorphic embedding of $\mathbb{CP}^1$ to $\mathbb{R}^6$ equipped with the almost complex structure induced by $J$ and $\pi$.
\end{proof}
\section{Almost complex structures tamed by symplectic forms}

We recall that an almost complex structure $J$ on a manifold $M$ is said to be {\em tamed} by a symplectic form $\omega$ if $$ \omega_x(v,J_xv) > 0 $$ for any $x\in M$ and any $v\in T_xM$, $v\neq 0$.
Let us remark the following known fact

\begin{prop}
Let $(M,\omega)$ be a symplectic manifold such that the second $($de Rham$)$ cohomology group $\,{\rm H}^{2}(M)$ is trivial and let $J$ be an almost complex structure on $M$ tamed by $\omega$. Then $(M,J)$ does not admit any compact almost complex submanifold.
\end{prop}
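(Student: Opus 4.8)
The plan is to argue by contradiction via a volume/cohomology obstruction, in the spirit of the classical fact that a closed symplectic manifold has non-trivial second de Rham cohomology. Suppose that $(M,J)$ admitted a compact almost complex submanifold $i\colon N\hookrightarrow M$, of real dimension $2k$ with $k\geq 1$. Being almost complex, $N$ carries a canonical orientation. The first step is to observe that the restriction $i^*\omega$ is a symplectic form on $N$. Indeed, at each $p\in N$ the tangent space $T_pN$ is $J$-invariant, so for every $v\in T_pN$ with $v\neq 0$ the taming condition gives $(i^*\omega)_p(v,J_pv)=\omega_p(v,J_pv)>0$; in particular $v$ cannot lie in the kernel of $(i^*\omega)_p$, whence $i^*\omega$ is non-degenerate. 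It is also closed, being the pull-back of the closed form $\omega$. Thus $i^*\omega$ is symplectic, $(i^*\omega)^k$ is a volume form on $N$ that is positive with respect to the canonical orientation, and consequently $\int_N (i^*\omega)^k>0$.

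The second step uses the hypothesis ${\rm H}^2(M)=0$. Since $\omega$ is closed and the second de Rham cohomology of $M$ vanishes, $\omega$ is exact, say $\omega=d\eta$ for some $1$-form $\eta$ on $M$. Pulling back, $i^*\omega=d(i^*\eta)$ is exact on $N$. Writing $\beta:=i^*\eta$ we obtain $(i^*\omega)^k=(d\beta)^k=d\bigl(\beta\wedge (d\beta)^{k-1}\bigr)$, so the top-degree form $(i^*\omega)^k$ is itself exact. As $N$ is compact and without boundary, Stokes' theorem yields $\int_N (i^*\omega)^k=0$, contradicting the strict positivity obtained in the first step. This contradiction shows that no such $N$ can exist.

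The argument is entirely elementary once the taming condition is correctly exploited, so I do not expect a serious obstacle; the only point requiring genuine care is the first step, namely that taming restricts to an \emph{honest} (non-degenerate) symplectic form on every almost complex submanifold. This is exactly where the $J$-invariance of $T_pN$ is used: without it the inequality $\omega(v,Jv)>0$ would no longer be available for vectors tangent to $N$, and non-degeneracy of $i^*\omega$ could fail. Everything else — exactness of $\omega$ forced by ${\rm H}^2(M)=0$, the resulting exactness of its $k$-th power, and the vanishing of the integral by Stokes — is formal.
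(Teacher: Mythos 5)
Your proof is correct and follows exactly the same route as the paper: the paper's (very terse) argument also notes that ${\rm H}^2(M)=0$ forces $\omega=d\theta$ to be exact, that the restriction of $\omega$ to a compact almost complex submanifold $N$ is an exact symplectic form, and that Stokes' theorem then forbids $N$ from being compact. You have simply spelled out the two points the paper leaves implicit — that taming plus $J$-invariance of $T_pN$ gives non-degeneracy of $i^*\omega$, and that $\int_N(i^*\omega)^k$ is both positive and zero — so your write-up is a faithful, more detailed version of the same proof.
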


\begin{proof}
Since the second cohomology group is trivial the symplectic form $\omega$ is exact, namely $\omega = d\theta$. If $N \hookrightarrow M$ is almost complex then $(N,\omega = d\theta)$ is exact symplectic and so Stokes Theorem implies that $N$ cannot be compact
\end{proof}

Here is an application:

\begin{prop}
Let $J$ be an almost complex structure on $\R^{2n}$ tamed by a symplectic form. Then $(\R^{2n},J)$
does not admit any compact almost complex submanifold. In particular, the almost complex structures $J_{\Lambda}$'s described in section $2$
cannot be tamed by any symplectic form.
\end{prop}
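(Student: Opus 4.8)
The plan is to obtain this Proposition as a direct corollary of the preceding one, the key input being that $\R^{2n}$ is contractible. First I would observe that, by homotopy invariance of de Rham cohomology (equivalently, by the Poincar\'e lemma), every positive-degree de Rham cohomology group of $\R^{2n}$ vanishes; in particular ${\rm H}^2(\R^{2n})=0$. Thus the hypotheses of the previous Proposition are met with $M=\R^{2n}$: any symplectic form on $\R^{2n}$ is automatically exact. Hence, if $J$ is tamed by a symplectic form $\omega=d\theta$, the argument of that Proposition (Stokes' theorem applied to an almost complex submanifold $N$, on which $\omega^{\dim_\C N}$ is a positive volume form yet exact) shows that no \emph{compact} almost complex submanifold can exist. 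This settles the first assertion.

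For the ``in particular'' part I would argue by contradiction. Note that the $J_\Lambda$'s of Theorem \ref{2main} live on $\R^{4n}$, which is again contractible, so the first assertion applies verbatim with $2n$ replaced by $4n$: were some $J_\Lambda$ tamed by a symplectic form, then $(\R^{4n},J_\Lambda)$ would admit no compact almost complex submanifold. But Theorem \ref{2main} provides a pseudo-holomorphic \emph{embedding} of the almost complex torus $\R^{2n}/\Lambda$ into $(\R^{4n},J_\Lambda)$, and the image of such an embedding is precisely a compact almost complex submanifold. This contradiction forces the conclusion that no $J_\Lambda$ can be tamed by any symplectic form.

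The only point requiring a word of care is to confirm that the embedded torus genuinely qualifies as an almost complex submanifold in the sense used by the preceding Proposition, so that its compactness together with the exactness of $\omega$ yields the Stokes contradiction. This is immediate: the map in Theorem \ref{2main} is a pseudo-holomorphic immersion of a compact manifold, hence realizes the torus as a compact almost complex submanifold. I do not anticipate any real obstacle here; the entire statement is a formal consequence of the contractibility of Euclidean space and of the existence result of Theorem \ref{2main}, and the proof should occupy only a few lines.

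\begin{proof}
Since $\R^{2n}$ is contractible, ${\rm H}^2(\R^{2n})=0$, so the previous Proposition applies with $M=\R^{2n}$ and gives the first claim. If some $J_\Lambda$ on $\R^{4n}$ were tamed by a symplectic form, the same reasoning (now with $\R^{4n}$, which is also contractible) would prevent $(\R^{4n},J_\Lambda)$ from carrying any compact almost complex submanifold. This contradicts Theorem \ref{2main}, which embeds the compact almost complex torus $\R^{2n}/\Lambda$ pseudo-holomorphically into $(\R^{4n},J_\Lambda)$. Hence no $J_\Lambda$ can be tamed by a symplectic form.
\end{proof}
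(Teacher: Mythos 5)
Your proposal is correct and matches the paper's intended argument exactly: the paper states this proposition as an immediate ``application'' of the preceding one (triviality of ${\rm H}^2$ of Euclidean space plus the Stokes obstruction), with the $J_\Lambda$'s ruled out because Theorem \ref{2main} supplies a compact pseudo-holomorphically embedded torus. You have simply written out the details the paper leaves implicit, including the correct observation that the relevant ambient space for the $J_\Lambda$'s is $\R^{4n}$.
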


\begin{rem}{\em
For other examples of almost complex structures which can not be calibrated by symplectic forms see \cite{MiTo00,To02}}.
\end{rem}
\vspace{0.5cm}
\begin{center}
{\bf Acknowledgment}
\end{center}

We thank Simon Salamon, Paolo de Bartolomeis and Diego Matessi for useful comments and remarks.

\end{document}